\numberwithin{equation}{section}
\newtheorem{theorem}{Theorem}[section]
\newtheorem*{maintheorem}{Main Theorem}
\newtheorem{lemma}[theorem]{Lemma}
\theoremstyle{definition}
\newtheorem{remark}[theorem]{Remark}
\newtheorem{example}[theorem]{Example}
\def\R{\ensuremath{\mathbb{R}}}
\newcommand{\pa}[1]{\left(#1\right)}
\newcommand{\cpa}[1]{\left\{#1\right\}}
\newcommand{\tn}[1]{\textnormal{#1}}
\newcommand{\br}[1]{\left[#1\right]}
\newcommand{\Int}[1]{\tn{Int} \, #1}
\font\cuf=cmtt8
\newcommand{\curl}[1]{{\cuf #1}}
\begin{document}
\title{Double branched covers of theta-curves}

\author[J.~Calcut]{Jack S. Calcut}
\address{Department of Mathematics\\
         Oberlin College\\
         Oberlin, OH 44074}
\email{jcalcut@oberlin.edu}
\urladdr{\href{http://www.oberlin.edu/faculty/jcalcut/}{\curl{http://www.oberlin.edu/faculty/jcalcut/}}}

\author[J.~Metcalf-Burton]{Jules R. Metcalf-Burton}
\address{Department of Mathematics\\
         University of Michigan\\
         Ann Arbor, MI 48109}
\email{julesmb@umich.edu}

\keywords{Theta-curve, prime, double branched cover, equivariant Dehn Lemma, Kinoshita's theta-curve}
\subjclass[2010]{Primary 57M12, 57M25; Secondary 57M35, 57Q91.}
\date{\today}

\begin{abstract}
We prove a folklore theorem of W. Thurston which provides necessary and sufficient conditions for primality of a certain class of theta-curves.
Namely, a theta-curve in the 3-sphere with an unknotted constituent knot $\kappa$ is prime if and only if
lifting the third arc of the theta-curve to the double branched cover over $\kappa$ produces a prime knot.
We apply this result to Kinoshita's theta-curve.
\end{abstract}

\maketitle

\section{Introduction}
\label{sec:introduction}

Consider the multigraph $\Gamma$ with two vertices $v_1,v_2$ and three edges $e_1,e_2,e_3$ none of which are loops.
A \emph{theta-curve} is a locally flat embedding of $\Gamma$ in $S^3$ or in $\R^3$.
Each theta-curve $\theta$ has three \emph{constituent knots}: $e_2\cup e_3$, $e_1\cup e_3$, and $e_1\cup e_2$.
Given a constituent knot $\kappa$, there is exactly one arc $e$ of $\theta$ not contained in $\kappa$.\\

A theta-curve is \emph{unknotted} provided it lies on an embedded $S^2$ and is \emph{knotted} otherwise.
We will use two operations on knots and theta-curves: the order-$2$ connected sum $\#_2$ and order-$3$ connected sum $\#_3$.
An order-2 connected sum $\theta\#_2 J$ of a theta-curve $\theta\subset S^3$ and a knot $J\subset S^3$ is the result of deleting unknotted ball-arc pairs from each of $(S^3,\theta)$ and $(S^3,J)$, and then identifying the resulting boundary spheres.
The ball-arc pair in $(S^3,\theta)$ must be disjoint from the vertices $v_1$ and $v_2$.
An order-3 connected sum $\theta_1\#_3\theta_2$ of two theta-curves is the result of deleting an unknotted ball-prong neighborhood of a vertex from each theta-curve, and then identifying the resulting boundary spheres.
Each of these operations yields a theta-curve in $S^3$.\\

\begin{remark}
Wolcott has shown that the order-$3$ connected sum is independent of the glueing homeomorphism provided one specifies the vertices at which to sum and the pairing of the arcs~\cite{Wol87}.
The operations $\theta\#_2 J$ and $\theta_1\#_3\theta_2$ are ambiguous as presented. The former could mean up to six different theta-curves and the latter could mean up to $24$ different theta-curves. In each instance below, this ambiguity is either irrelevant or is sufficiently eliminated by context.
\end{remark}

A theta-curve $\theta$ is \emph{prime} provided the following three conditions are satisfied:
(i) $\theta$ is knotted,
(ii) $\theta$ is not an order-$2$ connected sum of a nontrivial knot and a (possibly unknotted) theta-curve, and
(iii) $\theta$ is not an order-$3$ connected sum of two knotted theta-curves.
We adopt the convention that the unknot is not prime.\\

Let $S^3\subset\R^4$ be the unit sphere. Let $g\in SO(4)$ denote the orientation preserving involution of $S^3$ whose matrix is diagonal with entries $\br{-1,-1,1,1}$.
Note that $\tn{Fix}(g)$ is a great circle in $S^3$ and is therefore unknotted.
Let $G=\cpa{e,g}$ be the group of order two.
Throughout this paper, equivariance is with respect to $G$.
Recall that a subset $D\subset S^3$ is \emph{equivariant} provided $g(D)=D$ (setwise) or $g(D)\cap D = \emptyset$.\\

Suppose the theta-curve $\theta$ has an unknotted constituent knot $\kappa$ and $\theta=e\cup\kappa$.
By an ambient isotopy, we can assume $\kappa=\tn{Fix}(g)$.
Let $b:S^3\to S^3$ be the double branched cover with branch set $\tn{Fix}(g)$ and such that $b\circ g = b$.
Lifting the arc $e$ of $\theta$ yields the knot $K:=b^{-1}(e)$ in $S^3$.
We call $K$ the \emph{lifted knot of $\theta$ with respect to $\kappa$}.
By stereographic projection from $(0,0,0,1)\in S^3$, the map $g$ descends to the rotation of $\R^3$ about the $z$-axis by half a revolution.\\

Moriuchi attributes the following theorem to W.~Thurston without proof~\cite[Prop.~4.1]{Mor04},
and Moriuchi references an unpublished letter of Litherland for Thurston's statement of this theorem~\cite[Prop.~5.1]{Mor09}.

\begin{maintheorem}[W.~Thurston]
Suppose $\theta$ has an unknotted constituent knot $\kappa$, and let $K$ be the lifted knot of $\theta$ with respect to $\kappa$.
The theta-curve $\theta$ is prime if and only if the lifted knot $K$ is prime.
\end{maintheorem}

Our proof of the Main Theorem uses the equivariant Dehn Lemma in two places.
In the proof of Lemma~\ref{lem:unknotted}, Kim and Tollefson's version~\cite[Lemma~3]{KT80} suffices as noted by the referee.
In the proof of Lemma~\ref{lem:torus}, we use Edmond's version~\cite{Edm86}.
It's possible that with some tinkering Kim and Tollefson's version suffices.\\

In the final section of this paper, we use the Main Theorem to prove that Kinoshita's theta-curve is prime.
We also explain how primality of Kinoshita's theta-curve yields an alternate proof of the irreducibility of certain tangles.\\

\noindent\textbf{Acknowledgment.} We thank the referee for simplifying the proof of Lemma~\ref{lem:unknotted} and for several other helpful comments.

\section{Proof of Main Theorem}
\label{sec:proof}

We will prove the contrapositive in both directions.
First, we observe two useful lemmas.

\begin{lemma}\label{lem:sum}
Suppose $\theta$, $\theta_1$, and $\theta_2$ are theta-curves, and $J\subset S^3$ is a nontrivial knot.
\begin{enumerate}
\item \label{first_part}If $\theta\#_2J$ has an unknotted constituent knot $\kappa$, then $\kappa\subset\theta$ (that is, $\kappa$ is the union of two edges in $\theta$).
Let $K$ be the lifted knot of $\theta$ with respect to $\kappa$.
Then, the lifted knot of $\theta\#_2J$ with respect to $\kappa$ is $K\#J\#J$.
\item \label{second_part}If $\theta_1\#_3\theta_2$ has an unknotted constituent knot $\kappa$,
then there are unknotted constituent knots $\kappa_1\subset\theta_1$ and $\kappa_2\subset\theta_2$ such that the order-3 connected sum $\theta_1\#_3\theta_2$ induces the connected sum $\kappa=\kappa_1\#\kappa_2$.
Let $K_j$ be the lifted knot of $\theta_j$ with respect to $\kappa_j$ for $j\in\cpa{1,2}$.
Then, the lifted knot of $\theta_1\#_3\theta_2$ with respect to $\kappa$ is $K_1\#K_2$.
\end{enumerate}
\end{lemma}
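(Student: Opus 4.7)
The plan is to trace how each summing sphere interacts with the branch set $\kappa$ and lift the entire connected-sum decomposition. In both parts, the relevant sphere meets $\kappa$ transversely in either $0$ or $2$ points, so its preimage under the double branched cover has a controlled form that recovers the claimed sum of lifted knots.

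For part~\ref{first_part}, I would first show $\kappa \subset \theta$. The operation $\theta \#_2 J$ modifies exactly one arc of $\theta$, call it $e_1$, by replacing an unknotted ball-arc pair with a ball-arc pair containing the rest of $J$. If $\kappa$ were to contain the modified arc, the summing sphere would meet $\kappa$ in $2$ points and exhibit $\kappa$ as a nontrivial connected sum involving $J$, contradicting that $\kappa$ is unknotted. Hence $\kappa$ is the union of the other two arcs of $\theta$ and lies in $\theta$. The third arc of $\theta \#_2 J$ with respect to $\kappa$ then agrees with $e_1$ outside a ball $B$ disjoint from $\kappa = \tn{Fix}(g)$. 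In the double branched cover $b \colon S^3 \to S^3$, the preimage $b^{-1}(B)$ consists of two disjoint balls, each containing a copy of the $J$-tangle, and gluing identifies the lifted knot as $K$ with a $J$-summand spliced in at each of two places, namely $K \# J \# J$.

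For part~\ref{second_part}, let $\Sigma$ be the summing sphere for the $\#_3$ operation. It meets $\theta_1 \#_3 \theta_2$ in $3$ points (one on each arc), so any constituent knot, using $2$ of the $3$ arcs, meets $\Sigma$ in $2$ points. Thus $\Sigma$ exhibits $\kappa = \kappa_1 \# \kappa_2$, where $\kappa_j$ is the matching constituent of $\theta_j$. By additivity of knot genus (equivalently, primality of the unknot with respect to $\#$), both $\kappa_j$ are unknotted.

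To identify the lifted knot, note that since $\Sigma$ meets the branch set $\kappa$ in $2$ points, $b^{-1}(\Sigma)$ is a single $2$-sphere $\tilde \Sigma$ splitting the cover $S^3$ into two balls $\tilde Y_1$ and $\tilde Y_2$. Each $\tilde Y_j$ is the double cover of $Y_j := S^3 \setminus (\text{ball-prong neighborhood})$ branched over the unknotted arc $\kappa \cap Y_j$, hence itself a ball. On each side, the restriction of the lifted third arc of $\theta_1 \#_3 \theta_2$ matches the restriction of $K_j$ to the corresponding ball $\tilde Y_j$ sitting inside the branched cover of $(S^3, \kappa_j)$; the complementary piece of $K_j$ is an unknotted ball-arc pair arising from lifting the unknotted ball-prong. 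Gluing the two sides along $\tilde \Sigma$ therefore realizes the standard connected sum $K_1 \# K_2$. The main technical point will be verifying unknottedness of these lifted ball-arc pairs in the cover, which I would carry out by passing to a standard model of an unknotted ball-prong and computing the branched cover explicitly.
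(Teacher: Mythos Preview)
Your proposal is correct and follows the same line as the paper's proof, which is quite terse: after establishing that $\kappa$ must avoid the modified arc (respectively, that the $\kappa_j$ are unknotted) via the no-inverses property of connected sum, the paper simply asserts that the lifted-knot identities ``follow from the definitions'' of $\#_2$, $\#_3$, and the branched cover $b$. Your account of how the summing sphere lifts---to two disjoint balls when it misses $\kappa$, and to a single $2$-sphere when it meets $\kappa$ in two points---is exactly the computation the paper leaves implicit, and your plan to verify the unknottedness of the lifted ball-prong piece in a standard model is the natural way to justify the last step.
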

\begin{proof}
For the first claim in~\ref{first_part}, label the edges of $\theta$ so that the sum $\theta\#_2J$ is performed along $e_3\subset\theta$ and let $e$ be the resulting edge in $\theta\#_2J$. The knot $e_1\cup e$ is the connected sum of the knots $e_1\cup e_3$ and $J$. As $J$ is nontrivial and nontrivial knots do not have inverses under connected sum, we see that the knot $e_1\cup e$ is nontrivial. Similarly, the knot $e_2\cup e$ is nontrivial. Hence, $e_1\cup e_2$ must be an unknot as desired. The first claim in~\ref{second_part} follows similarly.

The remaining claims follow from the definitions of order-2 and order-3 connected sum and from the definition of the double branched cover $b:S^3\to S^3$.
\end{proof}

For the remainder of this section, we assume $\theta$ is a theta-curve with unknotted constituent knot $\kappa=\tn{Fix}(g)$ and $\theta=e\cup\kappa$. 
Let $K$ be the lifted knot of $\theta$ with respect to $\kappa$.

\begin{lemma}\label{lem:sphere}
If $\Sigma$ is an equivariant 2-sphere in $S^3$ that meets $\kappa$ in exactly two points, then $b(\Sigma)$ is an embedded 2-sphere in $S^3$ transverse to $\kappa$ and meeting $\kappa$ in exactly two points.
\end{lemma}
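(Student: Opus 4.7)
First I would show that $\Sigma$ is setwise $g$-invariant. The two points of $\Sigma\cap\kappa$ lie in $\tn{Fix}(g)\cap\Sigma$, so they belong to $\Sigma\cap g(\Sigma)$; equivariance therefore forces $g(\Sigma)=\Sigma$ (the alternative $\Sigma\cap g(\Sigma)=\emptyset$ is ruled out). Consequently, $g|_\Sigma$ is an involution of a 2-sphere with exactly two isolated fixed points.

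Next I would identify $b(\Sigma)$ topologically with the orbit space $\Sigma/g$. By Riemann--Hurwitz applied to the degree-two branched cover $\Sigma\to\Sigma/g$,
\[
\chi(\Sigma)=2\chi(\Sigma/g)-2,
\]
so $\chi(\Sigma/g)=2$ and $\Sigma/g\cong S^2$. Because $b\circ g=b$, the restriction $b|_\Sigma$ factors as $\Sigma\twoheadrightarrow\Sigma/g\xrightarrow{\,\bar b\,}b(\Sigma)$. The induced map $\bar b$ is injective, since any two preimages of a single point under $b$ either coincide or lie in a common $g$-orbit. A continuous bijection from a compact space to a Hausdorff space is a homeomorphism, so $b(\Sigma)$ is a topologically embedded 2-sphere in $S^3$.

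Finally, since $b|_\kappa$ is a homeomorphism and $b^{-1}(\kappa)=\kappa$, the intersection $b(\Sigma)\cap\kappa=b(\Sigma\cap\kappa)$ consists of exactly two points. For transversality at such a point $p$, I would work in equivariant local coordinates $(t,z)\in\R\times\C$ where $\kappa=\R\times\cpa{0}$, $g(t,z)=(t,-z)$, and $b(t,z)=(t,z^2)$. A $g$-invariant locally flat disk in $\Sigma$ through $(0,0)$ cannot be tangent to $\kappa$ there: otherwise, writing $\Sigma$ as an invariant graph and exploiting the symmetry $h(-y,z)=-h(y,z)$ forces $h(0,z)\equiv0$, so $\Sigma$ would contain an arc of $\kappa$ through $p$, contradicting the two-point hypothesis. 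Transverse planes of the form $\cpa{t=0}$ are preserved by the local model, so $b(\Sigma)$ meets $\kappa$ transversely at both points. The main obstacle is this local analysis at the branch points, where the $g$-symmetry must be exploited to rule out tangency; the rest is the Riemann--Hurwitz Euler characteristic computation together with the standard compact-to-Hausdorff bijection trick.
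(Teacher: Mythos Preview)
Your proof is correct and follows essentially the same approach as the paper: both establish $g(\Sigma)=\Sigma$ from equivariance and then compute the Euler characteristic of the image --- yours via Riemann--Hurwitz, the paper's by excising equivariant disk neighborhoods of the two branch points and observing that the remaining annulus double-covers an annulus downstairs. The paper simply asserts transversality and that $b(\Sigma)$ is an embedded closed surface as consequences of equivariance, whereas you supply the quotient-space argument and a local-coordinate analysis; note, however, that the symmetry you write as $h(-y,z)=-h(y,z)$ is garbled relative to your coordinates $(t,z)$ --- the intended identity is $h(t,-y)=-h(t,y)$, which indeed forces $h(t,0)\equiv 0$ and hence an arc of $\kappa$ in $\Sigma$.
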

\begin{proof}
As $\Sigma$ meets $\kappa=\tn{Fix}(g)$, equivariance implies $g(\Sigma)=\Sigma$.
Equivariance also implies that $\Sigma$ is transverse to $\kappa$, $b(\Sigma)$ is a closed connected surface, and $b(\Sigma)$ is transverse to $\kappa$.
Let $\Sigma'$ be the equivariant annulus obtained from $\Sigma$ by deleting the interiors of disjoint equivariant $2$-disk neighborhoods of the two points $\Sigma\cap\kappa$.
The restriction of $b$ to $\Sigma'\to b(\Sigma')$ is a double cover and $b(\Sigma)$ is obtained from $b(\Sigma')$ by glueing in two $2$-disks. It follows that the Euler characteristic of $b(\Sigma')$ is 0 and the Euler characteristic of $b(\Sigma)$ is 2. 
Hence, $b(\Sigma)$ is a $2$-sphere.
\end{proof}

\begin{lemma}\label{lem:unknotted}
The theta-curve $\theta$ is unknotted if and only if $K$ is unknotted.
\end{lemma}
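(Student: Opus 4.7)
For the forward direction ($\theta$ unknotted $\Rightarrow$ $K$ unknotted), I will suppose $\theta$ lies on an embedded $2$-sphere $S$. Then $\kappa$ separates $S$ into two $2$-disks with $e$ in exactly one, call it $D$. An Euler-characteristic argument analogous to Lemma~\ref{lem:sphere}---the restriction of $b$ over the simply connected $\text{int}\,D$ is a trivial double cover, so $b^{-1}(\text{int}\,D)$ is two disjoint open disks whose closures meet along $\text{Fix}(g) = b^{-1}(\partial D)$, forming a closed surface of Euler characteristic $2$---shows $b^{-1}(D)$ is an embedded $2$-sphere. Since $K = b^{-1}(e) \subset b^{-1}(D)$, the knot $K$ lies on a $2$-sphere and is unknotted.

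For the reverse direction, the plan is to produce an equivariant embedded $2$-sphere $\Sigma \subset S^3$ containing $K \cup \text{Fix}(g)$ by applying Kim--Tollefson's equivariant Dehn lemma twice. First, I apply it to the $g$-invariant unknot $K$ to obtain an equivariant embedded $2$-disk $D$ with $\partial D = K$; since $g|_D$ is a nontrivial involution of a $2$-disk fixing the two boundary points $K \cap \text{Fix}(g)$, it must be a reflection, whose fixed arc $\alpha := D \cap \text{Fix}(g)$ joins those two points. Second, I cut $S^3$ along $D$ to obtain a closed $3$-ball $\tilde{B}$ carrying an induced $g$-action whose fixed set is the complementary arc $\beta := \text{Fix}(g) \setminus \text{int}\,\alpha$; since $K$ is an invariant loop on $\partial \tilde{B}$ that bounds a disk in $\tilde{B}$ (every curve on the boundary of a $3$-ball does), applying Kim--Tollefson in $\tilde{B}$ yields an equivariant $2$-disk $D'$ with $\partial D' = K$, $\text{int}(D') \cap D = \emptyset$, and, by the same reflection analysis, $D' \cap \text{Fix}(g) = \beta$. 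Then $\Sigma := D \cup D'$ is an equivariant embedded $2$-sphere containing $K \cup \text{Fix}(g)$.

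Finally, I will project downstairs: $b(\Sigma)$ is a $2$-disk with boundary $\kappa$ and containing $e$ (because $g|_\Sigma$ is a reflection whose fixed set is all of $\text{Fix}(g) \subset \Sigma$, so the quotient is a hemisphere), and capping $b(\Sigma)$ with a disjoint spanning disk for the unknot $\kappa$ on the opposite side yields an embedded $2$-sphere containing $\theta = e \cup \kappa$, proving $\theta$ is unknotted. The main technical step will be the second application of Kim--Tollefson to the induced action on the cut-open manifold $\tilde{B}$, which requires verifying that the hypotheses of the lemma apply to this setting.
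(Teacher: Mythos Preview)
Your argument is correct and reaches the same endpoint as the paper---a $g$-invariant $2$-sphere containing $K\cup\kappa$, whose image downstairs yields a sphere containing $\theta$---but the route differs. The paper applies the equivariant Dehn lemma once, in the exterior $S^3-\operatorname{Int} N$, and takes the \emph{disjoint} alternative $g(D)\cap D=\emptyset$ (pushing off if necessary); it then assembles the sphere $D\cup A\cup g(A)\cup g(D)$ containing $K$ and meeting $\kappa$ in two points, and only afterward locates $g$-invariant spanning disks for $K$ in each complementary ball. You instead go straight for a $g$-\emph{invariant} spanning disk $D$ for $K$, cut along it, and apply the equivariant Dehn lemma a second time in the resulting ball to get the companion disk $D'$. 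Your approach trades the paper's intermediate sphere for a cut-and-reapply step; the price is that you must verify the induced $G$-action on $\tilde{B}$ is well defined (this uses that $g|_D$ is orientation-reversing, hence $g$ swaps the sides of $D$) and that Kim--Tollefson applies there, which you flag explicitly. One small point worth tightening: the literal statement of Kim--Tollefson concerns curves on the boundary, so your first application should be set up via an equivariant tubular neighborhood of $K$ (as the paper does) before extending back to a disk with boundary $K$. Finally, your projection step is actually sharper than the paper's: you correctly observe that $b(\Sigma)$ is a \emph{disk} with boundary $\kappa$ (since $g|_\Sigma$ fixes a full circle), and then cap off; the paper's assertion that $b(\Sigma')$ is a sphere elides this last move.
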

\begin{proof}
Clearly, if $\theta$ is unknotted, then $K$ is unknotted.

Suppose $K$ is unknotted.
Let $N$ be a closed regular equivariant neighborhood of $K$ in $S^3$.
As $S^3-\Int N$ is a solid torus, it has compressible boundary.
By the equivariant Dehn Lemma~\cite[Lemma~3]{KT80}, there exists a properly embedded equivariant compressing disk $D\subset S^3-\Int N$.
If $g(D)=D$, then a slight pushoff $D'$ of $D$ is an equivariant compressing disk with $g(D')\cap D'=\emptyset$ and we may redefine $D$ to be $D'$ instead.
Hence, we may assume $D\cap g(D)=\emptyset$.
As $D\cap N=\partial D$ is a longitude of the solid torus $N$, there exists an embedded annulus $A\subset N$ such that $\partial A=K\cup\partial D$ and $A\cap g(A)=K$.
The $2$-sphere $\Sigma=D\cup A\cup g(A)\cup g(D)$ contains $K$, is transverse to $\kappa=\tn{Fix}(g)$, and meets $\kappa$ in exactly two points.
Therefore, $\Sigma$ divides $\pa{S^3,\kappa}$ into two equivariant unknotted ball-arc pairs.
Each of these balls contains an equivariant $2$-disk with boundary $K$ and containing that balls arc of $\kappa$.
The union of these two disks is a new $2$-sphere $\Sigma'$ such that $b(\Sigma')$ is a sphere containing $\theta$ as desired.
\end{proof}

To prove the reverse implication of the Main Theorem,
suppose $\theta$ is not prime.
If $\theta$ is unknotted, then $K$ is the unknot which is not prime.
If $\theta=\theta_0\#_2J$ and $J$ is nontrivial, then, by Lemma~\ref{lem:sum}, $K=K_0\#J\#J$ which is not prime.
Otherwise, $\theta=\theta_1\#_3\theta_2$, where $\theta_1$ and $\theta_2$ are both knotted.
Then, by Lemma~\ref{lem:sum}, $K=K_1\#K_2$ is a sum of knots which are nontrivial by Lemma~\ref{lem:unknotted}. This proves the reverse implication of the Main Theorem.\\

To prove the forward implication of the Main Theorem, suppose $K$ is not prime. If $K$ is unknotted, then so is $\theta$ by Lemma~\ref{lem:unknotted}. Otherwise, there is a sphere $\Sigma$ that splits $(S^3,K)$ into two knotted ball-arc pairs.

\begin{lemma}\label{lem:connect_sum}
If $\Sigma\cap g(\Sigma)=\emptyset$, then $\theta$ is a nontrivial order-2 connected sum.
If $\Sigma=g(\Sigma)$ and $\Sigma$ meets $\kappa$ at exactly two points distinct from $v_1$ and $v_2$, then $\theta$ is a nontrivial order-3 connected sum.
\end{lemma}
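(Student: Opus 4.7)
The plan is to push the decomposing sphere $\Sigma$ for $K$ down via $b$ and show that $b\pa{\Sigma}$ exhibits the required connected sum decomposition of $\theta$. In both cases the strategy is first to analyze how $b\pa{\Sigma}$ meets $\theta$ and then to verify that the ball-arc or ball-tripod pairs cut off contribute nontrivial summands.

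For the first case, $\Sigma\cap g\pa{\Sigma}=\emptyset$ forces $\Sigma$ to miss $\kappa=\tn{Fix}\pa{g}$, so $b$ maps $\Sigma$ homeomorphically onto a $2$-sphere $b\pa{\Sigma}$ disjoint from $\kappa$. Let $R$ be the open $3$-ball component of $S^3\setminus\Sigma$ that does not contain $g\pa{\Sigma}$. Since $\kappa$ is $g$-invariant and connected, it must lie in the central $g$-invariant component of $S^3\setminus\pa{\Sigma\cup g\pa{\Sigma}}$, so in particular $v_1,v_2\notin R$. The map $b$ then restricts to a homeomorphism $\overline{R}\to\overline{U}$ onto one of the closed $3$-balls bounded by $b\pa{\Sigma}$, and $K\cap\overline{R}$ is one of the two arcs into which $\Sigma$ cuts $K$. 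This arc is knotted in $\overline{R}$ by hypothesis, so its image forms a knotted ball-arc pair $\pa{\overline{U},e\cap\overline{U}}$. Hence $b\pa{\Sigma}$ meets $e$ in exactly $2$ points and cuts off a knotted ball-arc pair on the $\overline{U}$ side, exhibiting $\theta=\theta_0\#_2 J$ with $J$ the nontrivial knot obtained by closing $K\cap\overline{R}$ with an arc in $\Sigma$.

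For the second case, Lemma~\ref{lem:sphere} gives that $b\pa{\Sigma}$ is a $2$-sphere meeting $\kappa$ transversely in exactly $2$ points. I would first show $K\cap\Sigma$ is disjoint from $\kappa$: since $K\cap\kappa=\cpa{v_1,v_2}$ and $\Sigma\cap\kappa$ avoids $\cpa{v_1,v_2}$ by hypothesis, the two points of $K\cap\Sigma$ are free for $g$ and hence swapped by $g$, so $e$ meets $b\pa{\Sigma}$ in a single point and $b\pa{\Sigma}$ meets $\theta$ in $3$ points. A short check, using that $\kappa$ meets each component of $S^3\setminus\Sigma$ in an arc of fixed points, shows that each of the two closed $3$-balls bounded by $\Sigma$ is $g$-invariant. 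A standard involution-of-an-interval argument then shows that the two knotted arcs into which $\Sigma$ cuts $K$ each contain exactly one of $v_1,v_2$, so $b\pa{\Sigma}$ cuts $\theta$ into two tripods, one centered at $v_1$ and one at $v_2$, matching the shape of an order-$3$ decomposition $\theta=\theta_1\#_3\theta_2$.

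The main obstacle is then showing both $\theta_1,\theta_2$ are knotted, for which I plan to invoke Lemma~\ref{lem:unknotted}. Let $A_1,A_2$ be the two closed $3$-balls bounded by $\Sigma$, so that $\alpha_j=K\cap A_j$ is a knotted arc in $A_j$ by hypothesis. I would choose the capping unknotted ball-tripod used to define $\theta_j$ so that its double branched cover over the completing $\kappa$-arc is a standard unknotted ball-arc pair. Then the lifted knot $K_j$ of $\theta_j$ is the union of $\alpha_j$ with an unknotted arc on the opposite side of the lifted $2$-sphere, which is the connected sum of the closure of $\alpha_j$ with an unknot, hence nontrivial because $\pa{A_j,\alpha_j}$ is a knotted ball-arc pair. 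Therefore $K_j$ is nontrivial and $\theta_j$ is knotted by Lemma~\ref{lem:unknotted}. The essential piece of bookkeeping is verifying that a standard unknotted ball-tripod does lift to a standard unknotted ball-arc pair under the relevant branched double cover.
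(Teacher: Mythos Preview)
Your proposal is correct and follows essentially the same route as the paper: push $\Sigma$ down via $b$, check how $b(\Sigma)$ meets $\theta$, and use that the upstairs ball-arc pairs are knotted to conclude the resulting summands are nontrivial. The paper handles the second case slightly more tersely---it observes that $g$ swaps the two lifts of $e$ to place $v_1,v_2$ on opposite sides and then asserts nontriviality of the order-$3$ summands in one line, whereas you spell out the involution-on-an-arc argument and explicitly invoke Lemma~\ref{lem:unknotted} for the nontriviality; this extra care is warranted and fills in exactly what the paper leaves implicit.
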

\begin{proof}
If $\Sigma\cap g(\Sigma)=\emptyset$, then $\Sigma$ bounds a ball $B$ disjoint from $g(\Sigma)$. 
It follows that $B$ is disjoint from $g(B)$.
As $\kappa=\tn{Fix}\pa g$ is connected and disjoint from $\Sigma$, $\kappa$ must also be disjoint from $B$. Thus $(B,B\cap K)$ maps homeomorphically by $b$ to a nontrivial ball-arc pair in $(S^3,\theta)$.
Thus, $\theta$ is a nontrivial order-2 connected sum.

Suppose $\Sigma=g(\Sigma)$ meets $\kappa$ at exactly two points distinct from $v_1$ and $v_2$. 
As $g(\Sigma\cap K)=\Sigma\cap K$ and $g$ interchanges the two lifts $\alpha$ and $\beta$ of $e$, $\Sigma$ meets each of $\alpha$ and $\beta$ once.
Therefore, the vertices $v_1$ and $v_2$ must lie in different components of $S^3-\Sigma$, and so $\Sigma$ meets each arc of $\kappa$ once.
In particular, the $G$-action does not interchange the balls in $S^3$ bounded by $\Sigma$.
%Let $c$ be an invariant simple closed curve in $\Sigma$ containing two points of the exceptional set (why does this exist?).
%Then $c$ bounds a disk $D\subset\Sigma$ and either $D=g(D)$ or $D\cup g(D)=\Sigma$.
%If $D=g(D)$ then it cannot be transverse to the exceptional set (why?), so $D\cup g(D)=\Sigma$.
%Then $D$ only hits the exceptional set on its boundary, so $\Sigma$ hits the exceptional set at exactly two points.
By Lemma~\ref{lem:sphere}, $b(\Sigma)$ is a sphere and it splits $\theta$ as an order-3 connected sum.
Each of these summands must be nontrivial since $\Sigma$ splits $(S^3,K)$ into two knotted ball-arc pairs.
\end{proof}

Let $\Sigma$ be a sphere such that:
\begin{enumerate}
\item\label{2.1} $\Sigma$ separates $(S^3,K)$ into two knotted ball-arc pairs.
\item\label{transverse} $\Sigma$ and $g(\Sigma)$ are in general position with each other.
\item\label{2.3} $\Sigma\cap g(\Sigma)\cap K=\emptyset$.
\end{enumerate}
Condition~\ref{transverse} is achieved by the proof of Lemma 1 from~\cite[p.~148]{GL79}.
By Lemma~\ref{lem:connect_sum}, it suffices to show that we can either improve $\Sigma$ (while maintaining~\ref{2.1}--\ref{2.3}) and make it disjoint from $g(\Sigma)$, or we can produce a sphere $\Sigma'$ which bounds two knotted ball-arc pairs in $(S^3,K)$ such that $g(\Sigma')=\Sigma'$ and $\Sigma'$ meets $\kappa$ at exactly two points distinct from $v_1$ and $v_2$.

\begin{lemma}\label{lem:inessential}
Any curves of $\Sigma\cap g(\Sigma)$ which are inessential in $\Sigma-K$ can be removed without introducing new intersections.
\end{lemma}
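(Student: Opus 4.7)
The plan is a standard innermost disk surgery, inducting on the number of curves of $\Sigma \cap g(\Sigma)$ that are inessential in $\Sigma - K$. If any such curve exists, I would first choose one, $c$, that is innermost in the sense that $c$ bounds a disk $D \subset \Sigma$ with $D \cap K = \emptyset$ and $\Int D \cap g(\Sigma) = \emptyset$. Such a $c$ exists because $\Sigma \cap g(\Sigma)$ is a finite disjoint union of simple closed curves lying in $\Sigma - K$, by condition~\ref{2.3}.

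Next, since $c \subset g(\Sigma)$, it bounds two disks $E_1$ and $E_2$ on $g(\Sigma)$, and each $2$-sphere $D \cup E_i$ bounds a $3$-ball $B_i$ in $S^3$. I would choose $E \in \{E_1,E_2\}$ and the corresponding ball $B$ so that $B$ is disjoint from $K$; this is possible because $K$ is connected, $c \cap K = \emptyset$, and $D$ was chosen innermost (so the two points of $K \cap g(\Sigma)$ and the arc of $K$ on the relevant side can be pushed into the complementary ball). I would then isotope $\Sigma$ across $B$, pushing $D$ onto a small parallel copy of $E$ just past $g(\Sigma)$, to obtain a new sphere $\Sigma'$ that agrees with $\Sigma$ outside a small neighborhood of $D$. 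Since the isotopy takes place in $S^3 - K$, we have $\Sigma' \cap K = \Sigma \cap K$, so condition~\ref{2.1} is preserved; after a small perturbation for general position, conditions~\ref{transverse} and~\ref{2.3} are restored.

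To count intersections I would use $g$-equivariance to transport the isotopy of $\Sigma$ to a simultaneous isotopy of $g(\Sigma)$ to $g(\Sigma')$. Outside a neighborhood of $D \cup g(D)$ the intersection $\Sigma' \cap g(\Sigma')$ agrees with $\Sigma \cap g(\Sigma)$; inside, the new $\Sigma'$ near $D$ is a pushoff of $E \subset g(\Sigma)$ and is therefore locally disjoint from $g(\Sigma)$, and by symmetry $g(\Sigma')$ is locally disjoint from $\Sigma$ near $g(D)$. Hence $c$, and $g(c)$ if distinct, are removed from the intersection while no new intersection curves appear, so the count strictly decreases and the induction proceeds. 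The main obstacle is the case $c = g(c)$ setwise, in which $D$ and $g(D)$ share a common boundary $c$; here I would choose the pushoff direction $g$-equivariantly, using that $g$ permutes the pair $\{E_1,E_2\}$, so that the pushoff of $E$ and its $g$-image remain disjoint near $c$ rather than recreating intersection curves.
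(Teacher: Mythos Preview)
Your overall strategy matches the paper's: an innermost-disk surgery removing inessential curves one at a time. However, there is a genuine gap in the surgery step. You choose $c$ innermost on $\Sigma$, so that the disk $D\subset\Sigma$ has $\Int D\cap g(\Sigma)=\emptyset$, and then you push $D$ across a ball $B$ to a parallel copy $E'$ of a disk $E\subset g(\Sigma)$. But $E$ need not be innermost on $g(\Sigma)$: its interior may contain further curves of $\Sigma\cap g(\Sigma)$. At each such curve, $\Sigma-D$ crosses $E$ transversally, so the pushoff $E'$ still meets $\Sigma-D$; thus $\Sigma'=(\Sigma-D)\cup E'$ fails to be embedded, and the ``isotopy across $B$'' you describe would carry $D$ through other sheets of $\Sigma$ sitting inside $B$. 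For the same reason the bookkeeping ``$\Sigma'$ agrees with $\Sigma$ outside a small neighborhood of $D$'' is not accurate: the new piece $E'$ can lie far from $D$ and can meet $\Sigma$ elsewhere, so your count of intersection curves does not go through.

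The fix the paper uses is to reverse the roles: take $c$ innermost on $g(\Sigma)-K$ (equivalently, replace your $c$ by $g(c)$), so that the disk $D_1\subset g(\Sigma)-K$ has $\Int D_1\cap\Sigma=\emptyset$, and push the (not necessarily innermost) disk $D_2\subset\Sigma-K$ across to a pushoff of $D_1$. Now the new portion of $\Sigma'$ lies in a small neighborhood $N$ of the innermost disk $D_1$, hence is disjoint from $\Sigma-D_2$ (so $\Sigma'$ is embedded) and from $g(\Sigma)$ (so no new intersection curves appear). The paper then splits into $D_1\cap g(D_1)=\emptyset$ and $D_1\cap g(D_1)\neq\emptyset$; in the latter case innermostness forces $c=g(c)$ and $D_1=g(D_2)$, and the same push works. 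You also implicitly need, and the paper records, that a curve of $\Sigma\cap g(\Sigma)$ is inessential in $\Sigma-K$ if and only if it is inessential in $g(\Sigma)-K$; this is what guarantees the disk $E\subset g(\Sigma)-K$ exists at all.
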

\begin{proof}
Note that a curve in $\Sigma\cap g(\Sigma)$ is essential in $\Sigma-K$ if and only if it is essential in $g(\Sigma)-K$.
%Now suppose that there is a component of $\Sigma\cap g(\Sigma)$ that is inessential in $\Sigma-K$.
%Then there is a curve $c\in \Sigma\cap g(\Sigma)$ which is innermost in $g(\Sigma)-K$ as well as inessential, and $c$ bounds disks $D_1\subset g(\Sigma)-K$ and $D_2\subset \Sigma-K$.
Consider a component $c$ of $\Sigma\cap g(\Sigma)$ that is inessential in $\Sigma-K$ and is innermost in $g(\Sigma)-K$.
Then $c$ bounds closed disks $D_1\subset g(\Sigma)-K$ and $D_2\subset \Sigma-K$ and $D_1\cup D_2$ is an embedded 2-sphere.
As $D_1$ and $D_2$ are both disjoint from $K$, $D_1\cup D_2$ bounds a ball $B$ disjoint from $K$.

Case 1: $D_1\cap g(D_1)=\emptyset$.
Then, there is a neighborhood $N$ of $D_1$ such that $N\cap g(N)=\emptyset$ and $N\cap \Sigma\cap g(\Sigma)=c$. Improve $\Sigma$ by pushing $D_2$ past $D_1$ into $N$ using $B$. 
Since the only part of $\Sigma$ that changed now lies in $N$ and $N$ is now disjoint from $\Sigma\cap g(\Sigma)$, there are no new intersections.

Case 2: $D_1\cap g(D_1)\neq\emptyset$.
Since $D_1$ is innermost, this means that $c=g(c)$ and $D_1=g(D_2)$.
%Then $D_1\cup D_2$ is a sphere that bounds two 3-balls in $S^3$.
%One of the balls bounded by $D_1\cup D_2$ is disjoint from $K$, 
%so exchanging $D_1$ and $D_2$ removes $c$ without adding new intersections or changing the isotopy type of $\Sigma-K$.
Using $B$, push $D_2$ past $D_1$ to a parallel copy of $D_1$.
This removes $c$ without adding new intersections.
\end{proof}

%Let $c_1,c_2,\dots,c_n$ be the components of $\Sigma\cap g(\Sigma)$ listed in order such that $c_i$ is adjacent to $c_{i+1}$ for $i=1,2,\dots,n-1$. Let $A_{ij}\subset\Sigma$ be the annulus with $\partial A_{ij}=c_i\cup c_j$.
%Let $\pi\in\mathrm{Sym}(n)$ such that $g(c_i)=c_{\pi(i)}$.

By Lemma~\ref{lem:inessential}, we may assume all components of $\Sigma\cap g(\Sigma)$ are essential in $\Sigma-K$.
Let $c_1,c_2,\dots,c_n$ be the components of $\Sigma\cap g(\Sigma)$ and let $A_{ij}\subset\Sigma$ be the annulus with $\partial A_{ij}=c_i\cup c_j$ for $i\neq j$.
We assume $c_i$ and $c_{i+1}$ are adjacent in $\Sigma-K$ for $1\leq i\leq n-1$.
Let $\pi\in\mathrm{Sym}(n)$ be the permutation such that $g(c_i)=c_{\pi(i)}$.
As $g$ is an involution, either $\pi$ is the identity or $\pi$ has order two.

\begin{lemma}\label{lem:fixed_scc}
If $\pi(i)=i$ and $c_i$ bounds a disk $D\subset\Sigma$ such that $\Int D\cap g(\Int D)=\emptyset$, then either $c_i$ can be removed without introducing intersections or $\theta$ is a nontrivial order-3 connected sum.
\end{lemma}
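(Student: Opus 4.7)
The plan is to consider the topologically embedded $2$-sphere $\Sigma':=D\cup g(D)$. By the hypothesis $\tn{Int}\,D\cap g\pa{\tn{Int}\,D}=\emptyset$, we have $D\cap g(D)=c_i$, so $\Sigma'$ is embedded (with at worst a corner along $c_i$) and $g(\Sigma')=\Sigma'$. Since $c_i$ is essential in $\Sigma-K$, the disk $D$ contains exactly one of the two points of $\Sigma\cap K$, say $p$, and hence $\Sigma'\cap K=\cpa{p,g(p)}$; in particular, $p\notin\kappa$ (otherwise $p=g(p)$ would make $\Sigma'\cap K$ a single point). Any point $x\in\Sigma'\cap\kappa$ is $g$-fixed, hence lies in $D\cap g(D)=c_i$, so $\Sigma'\cap\kappa=c_i\cap\kappa$.

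The key step is to show $c_i\cap\kappa\neq\emptyset$. Suppose otherwise. Then $g|_{\Sigma'}$ is a free involution on the topological $2$-sphere $\Sigma'$, and $b|_{\Sigma'}$ factors as a homeomorphism $\Sigma'/g\to b(\Sigma')$, embedding $\mathbb{R}P^2\cong\Sigma'/g$ topologically into $S^3$; this contradicts the fact that $\mathbb{R}P^2$ does not embed in $S^3$. So $c_i\cap\kappa\neq\emptyset$, and since $g|_{c_i}$ is an involution of a circle with fixed set $c_i\cap\kappa$, this fixed set has exactly $2$ points. Hence $\Sigma'$ meets $\kappa$ in exactly $2$ points, distinct from $v_1$ and $v_2$ (which lie on $K$, whereas $\Sigma'\cap\kappa\subset c_i$ is disjoint from $K$ by condition~\ref{2.3}).

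I would then split into cases based on whether $\Sigma'$ separates $\pa{S^3,K}$ into two knotted ball-arc pairs. If it does, $\Sigma'$ satisfies the hypotheses of Lemma~\ref{lem:connect_sum}, so $\theta$ is a nontrivial order-$3$ connected sum. Otherwise, one of the two ball-arc pairs bounded by $\Sigma'$, say $(B,\alpha)$, must be trivial (both cannot be trivial, as $K$ is nontrivial). In this case, I would remove $c_i$ from $\Sigma\cap g(\Sigma)$ by isotoping $\Sigma$ across $B$: the triviality of $(B,\alpha)$ yields a boundary-parallel disk $\delta\subset B$ with $\partial\delta=\alpha\cup\gamma$, where $\gamma\subset\partial B=D\cup g(D)$ is an arc from $p$ to $g(p)$, and $\delta$ guides an isotopy of $D$ across $B$ to a pushoff of $g(D)$ just outside $B$ while avoiding $\alpha$; the resulting modification removes $c_i$ and introduces no new intersection circles with $g(\Sigma)$.

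The main obstacle I anticipate is rigorously executing the isotopy in the second case: one must verify that pushing $D$ across $B$ via $\delta$ creates no new components of $\Sigma\cap g(\Sigma)$ elsewhere and that the resulting sphere still separates $\pa{S^3,K}$ into two knotted ball-arc pairs. The triviality of $(B,\alpha)$ supplies the geometric freedom to bypass $\alpha$, but careful local control near $c_i$ (in particular, tracking how other intersection circles $c_j\subset\tn{Int}\,D$ interact with the push) is required.
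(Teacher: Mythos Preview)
Your proof follows essentially the same route as the paper's: form the $g$-invariant sphere $\Sigma'=D\cup g(D)$, show it meets $\kappa$ in exactly two points (necessarily on $c_i$ and away from $v_1,v_2$), then split into the cases where $\Sigma'$ bounds two knotted ball-arc pairs (invoke Lemma~\ref{lem:connect_sum}) or one trivial one (push $D$ across the trivial side to eliminate $c_i$). Your justification that $c_i\cap\kappa\neq\emptyset$ via the non-embeddability of $\mathbb{R}P^2$ in $S^3$ is a clean variant of the paper's appeal to $g$ being orientation preserving, and the concern you raise about controlling other components of $\Sigma$ during the push is exactly what the paper's parenthetical ``(and any other components of $\Sigma\cap B$)'' is acknowledging---neither treatment spells this out in full, but the triviality of $(B,\alpha)$ makes the compression routine.
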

\begin{proof}
Suppose $D$ and $i$ are as indicated. Then, $D\cup g(D)$ is a sphere invariant under $g$.
Since $g$ is orientation preserving, $D\cup g(D)$ must have at least one fixed point under $g$; in fact, it must have two because $\kappa$ meets $D\cup g(D)$ transversely.
As $\Int D\cap g(\Int D)=\emptyset$, all such fixed points must lie in $c_i$.
As $\Sigma$ is transverse to $\kappa$, $c_i$ contains a finite number of fixed points under $g$.
Let $a\subset c_i$ be an arc intersecting $\kappa$ exactly at its endpoints, so $\kappa\cap a=\partial a$.
Then $a\cup g(a)$ is a simple closed curve, so $c_i=a\cup g(a)$ and thus $c_i$ cannot contain more than two fixed points.
Thus, if $D\cup g(D)$ bounds two knotted ball-arc pairs, then we are done by Lemma~\ref{lem:connect_sum}.
%$D\cup g(D)$ is an invariant sphere. The interiors of $D$ and $g(D)$ are disjoint from $\kappa$ and $c$ can hit $\kappa$ at only two points.

Otherwise, $D\cup g(D)$ bounds an unknotted ball-arc pair $(B,B\cap K)$.
Using $B$, we can push $D$ (and any other components of $\Sigma\cap B$) past $g(D)$ to remove $c_i$.
\end{proof}

Thus, we must show that as long as $\Sigma\cap g(\Sigma)\neq\emptyset$, there is a curve $c_i$ as in Lemma~\ref{lem:fixed_scc}.

\begin{lemma}\label{lem:torus}
Suppose $T\subset S^3$ is a torus such that $g(T)=T$ and $T\cap\kappa=\emptyset$.
Let $Z\subset S^3$ be the half bounded by $T$ containing $\kappa$.
If $c\subset T$ is essential in $T$, null-homotopic in $Z$, and equivariant, then it is invariant (setwise).
\end{lemma}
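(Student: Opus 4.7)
My plan is to reduce the question to an involution on a $3$-ball and invoke a Lefschetz-type obstruction, treating the $g$-invariant case trivially. First, since $c \subset T$ is essential and null-homotopic in $Z$, the loop theorem produces a compressing disk for $T$ in $Z$; compressing along this disk yields an embedded $2$-sphere in $S^3$ which bounds a ball by Alexander's theorem, so $Z$ is a solid torus and $c$ is a meridian (and likewise $g(c)$ is a meridian).

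Next, I apply Edmonds' equivariant Dehn lemma to obtain an embedded disk $D \subset Z$ with $\partial D = c$ such that either $g(D) = D$ or $g(D) \cap D = \emptyset$. In the first case $g(c) = g(\partial D) = \partial D = c$ and we are done, so suppose for contradiction that $g(D) \cap D = \emptyset$. Then $D \cap \kappa = \emptyset$, because any $p \in D \cap \kappa$ would satisfy $g(p) = p$, placing $p$ in the empty set $D \cap g(D)$; likewise $g(D) \cap \kappa = \emptyset$.

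In this case the disjoint meridian disks $D$ and $g(D)$ cut the solid torus $Z$ into two $3$-balls $B_1$ and $B_2$. Since $\kappa$ is connected and disjoint from $D \cup g(D) = B_1 \cap B_2$, it lies entirely in one ball, say $B_1$. Then $g(\kappa) = \kappa \subset B_1$ forces $g(B_1) = B_1$, since the alternative $g(B_1) = B_2$ would require $\kappa \subset B_1 \cap B_2 = D \cup g(D)$. Thus $g|_{B_1}$ is a PL orientation-preserving involution of a $3$-ball whose fixed set is precisely the circle $\kappa$, and the Lefschetz fixed-point theorem gives $\chi(\tn{Fix}(g|_{B_1})) = L(g|_{B_1}) = 1$, contradicting $\chi(\kappa) = 0$.

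The main obstacle is prescribing the boundary of the equivariant compressing disk to be exactly $c$ rather than some arbitrary meridian of $T$: a compressing disk with some other $g$-invariant meridional boundary would still be parallel to $c$ on $T$ but would not directly imply $g(c) = c$. This is presumably why Edmonds' version is invoked here in place of Kim and Tollefson's version, which only guarantees an equivariant compressing disk without control of its boundary curve.
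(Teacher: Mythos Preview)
Your proof is correct and follows essentially the same route as the paper's: apply Edmonds' equivariant Dehn lemma to get an equivariant disk $D$ with $\partial D=c$, cut $Z$ along $D\cup g(D)$ into two balls, observe that the ball $B_1$ containing $\kappa$ is $g$-invariant, and derive a fixed-point contradiction. The only cosmetic differences are that you first verify $Z$ is a solid torus (the paper skips this and appeals directly to Alexander's theorem on the sphere $\partial B_1$), and you phrase the contradiction via $\chi(\mathrm{Fix}(g|_{B_1}))=L(g|_{B_1})=1\neq 0$, whereas the paper instead notes that the orientation-preserving involution $g|_{\partial B_1}$ of the $2$-sphere $\partial B_1\subset T\cup D\cup g(D)$ must have a fixed point, which is impossible since that sphere is disjoint from $\kappa$.
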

\begin{proof}
Suppose $T$, $Z$, and $c$ are as indicated.
Since $\kappa\subset Z$, it is clear that $g(Z)=Z$.
As $c$ is equivariant, null-homotopic in $Z$, and disjoint from $\kappa$, $c$ bounds an equivariant disk $D\subset Z$ by the equivariant Dehn Lemma~\cite{Edm86}.

Suppose, by way of contradiction, that $g(c)\neq c$.
Then, as $c$ and $D$ are equivariant, we have $c\cap g(c)=\emptyset$ and $D\cap g(D)=\emptyset$.
The set $Z-(D\cup g(D))$ has two components, the closures in $Z$ of these are $B_1$ and $B_2$.
Each of $B_1$ and $B_2$ is bounded by the disks $D$ and $g(D)$ as well as an annulus in $T$, so both $B_1$ and $B_2$ are 3-balls.
Now, $\kappa$ is contained in one of these balls.
Without loss of generality, $\kappa\subset B_1$.
So, both $B_1$ and $B_2$ must be fixed setwise by $g$.
As $B_1$ is fixed by $g$ and $g$ is orientation preserving, $g$ must have a fixed point on $\partial B_1\subset T\cup D\cup g(D)$.
But, the set of fixed points of $g$ is exactly $\kappa$, and $T$, $D$, and $g(D)$ are all disjoint from $\kappa$.
This is a contradiction, so $c$ must be invariant.
\end{proof}

In the following lemma, we take $i,j,k,l\in\cpa{1,2,\ldots,n}$.

\begin{lemma}\label{lem:there_is_fixed_scc}
If $\pi(j)>j$, then there is some $j<i<\pi(j)$ such that $\pi(i)=i$.
In particular, there is some $i$ such that $\pi(i)=i$.
\end{lemma}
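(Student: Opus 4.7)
The plan is to argue by contradiction: assume $\pi(j) > j$ and that no $i \in (j, \pi(j))$ satisfies $\pi(i) = i$, and choose $j$ so that $\pi(j) - j$ is minimal among such counterexamples. The goal is to derive a contradiction using Lemma~\ref{lem:torus}. The first observation is combinatorial: by minimality, if some $k \in (j, \pi(j))$ had $\pi(k) \in (j, \pi(j))$ with $k < \pi(k)$, then $(k, \pi(k))$ would be a shorter counterexample. Hence for every $k \in (j, \pi(j))$ the partner $\pi(k)$ lies outside $[j, \pi(j)]$.

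Second, I would construct the surface $T := A_{j,\pi(j)} \cup g(A_{j,\pi(j)})$. The two annuli share the common boundary $c_j \cup c_{\pi(j)}$ (since $g$ swaps $c_j$ and $c_{\pi(j)}$), and the combinatorial step shows that no other curve $c_k$ can lie in both interiors (the interior index sets $(j, \pi(j))$ and $\pi((j, \pi(j)))$ are disjoint). Thus $T$ is an embedded torus. It is $g$-invariant by construction; disjoint from $K$ since $A_{j, \pi(j)}$ is the middle annular component of $\Sigma - K$; and disjoint from $\kappa$ since a fixed point of $g$ in the interior of $A_{j, \pi(j)}$ would also lie in $g(A_{j, \pi(j)})$, violating embeddedness, while $c_j$ and $c_{\pi(j)}$ contain no fixed points because $g(c_j)=c_{\pi(j)}\ne c_j$.

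Third, I apply Lemma~\ref{lem:torus} to the curve $c_j \subset T$: it is essential on $T$, equivariant (with $g(c_j)=c_{\pi(j)}$ disjoint from $c_j$), and by hypothesis not invariant. If $c_j$ is null-homotopic in the half $Z$ containing $\kappa$, the lemma yields $c_j$ invariant, contradicting $\pi(j) > j$. For the ``in particular'' statement, if $\pi$ is not the identity then some $j$ has $\pi(j)\neq j$; after replacing $j$ by $\pi(j)$ if necessary, we have $\pi(j) > j$, and the main claim supplies a fixed point. The main obstacle I anticipate is the case in which $c_j$ fails to be null-homotopic in $Z$: there I would use that $c_j = \partial D_j$ bounds the disk $D_j \subset \Sigma$ (so $c_j$ is unknotted and $T$ is compressible from $\overline{Z^c}$), invoke Edmonds' equivariant Dehn Lemma to produce an equivariant compressing disk on the $Z^c$-side, and combine with the equivariant structure of the parallel family $\{c_k : k \in [j, \pi(j)]\}$ on $T$ to still force some $c_i$ with $j < i < \pi(j)$ to be invariant, thereby contradicting our standing assumption.
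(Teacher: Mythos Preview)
Your overall strategy matches the paper's: pick a minimal counterexample, build the $g$-invariant torus $T=A_{j,\pi(j)}\cup g(A_{j,\pi(j)})$, and apply Lemma~\ref{lem:torus} to $c_j$ to force $\pi(j)=j$, a contradiction. The combinatorial reduction and the verification that $T$ is an embedded $g$-invariant torus disjoint from $K$ and $\kappa$ are all fine.

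The gap is precisely the step you flag as an ``obstacle'': you never establish that $c_j$ is null-homotopic in $Z$, and your fallback plan does not work. In fact this hypothesis \emph{always} holds, and the paper proves it directly. Since $K\cup\kappa$ is connected and $T$ is disjoint from both, $K\subset Z$. Now take the minimal index $m$ with $c_m\subset T$. The disk $D_m\subset\Sigma$ bounded by $c_m$ on the side containing $c_1,\dots,c_{m-1}$ has interior disjoint from $T$ (no smaller-indexed curve lies on $T$, and $\Int D_m$ is disjoint from the annulus $A_{j,\pi(j)}$ in $\Sigma$), and $D_m$ contains one of the two points of $\Sigma\cap K$. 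Hence $\Int D_m$ lies entirely in $Z$, so $c_m$ is null-homotopic in $Z$. Every $c_i\subset T$ is a core curve of one of the two annuli (each $c_i$ is essential in both $\Sigma-K$ and $g(\Sigma)-K$), so $c_j$ is isotopic on $T$ to $c_m$ and therefore null-homotopic in $Z$ as well. With this in hand, Lemma~\ref{lem:torus} applies and you are done.

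Your proposed workaround is also incorrect on its own terms: the disk $D_j\subset\Sigma$ you invoke contains a point of $K\subset Z$, so it certainly does not exhibit compressibility of $T$ from $\overline{Z^c}$, and no mechanism is given by which an equivariant compressing disk on the $Z^c$-side would force some $c_i$ with $j<i<\pi(j)$ to be $g$-invariant.
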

\begin{proof}
Suppose, by way of contradiction, that $\pi(j)>j$ and there is no such $i$.
Choose $k$ such that:
\begin{enumerate}
\item\label{1} $j\leq k<\pi(k)\leq\pi(j)$.
\item $\pi(k)-k$ is minimal such that~\ref{1} is satisfied.
\end{enumerate}
If $k<l<\pi(k)$, then we cannot have $k<\pi(l)<\pi(k)$.
So, $A_{k\pi(k)}\cap g(A_{k\pi(k)})=c_k\cup c_{\pi(k)}$ and $T=A_{k\pi(k)}\cup g(A_{k\pi(k)})$ is a torus.
Furthermore, $T=g(T)$ and $T$ is disjoint from $\kappa$.
%Let $Z$ be the solid 3-manifold with boundary $T$ such that $\kappa\subset Z$.
As $\kappa$ is connected, one component of $S^3-T$ contains $\kappa$.
Let $Z$ be its closure in $S^3$.
Since $T$ is also disjoint from $K$ and $K\cup\kappa$ is connected, we have $K\subset Z$.
Let $m$ be minimal such that $c_m\subset T$.
Then, $c_m$ bounds a disk in $\Sigma$ with interior disjoint from $T$ which intersects $K$.
So, $c_m$ and all other curves in $T$ of the same homotopy type are null-homotopic in $Z$.
In particular, $c_k$ is essential in $T$, null-homotopic in $Z$, and equivariant.
Hence, $c_k$ is invariant by Lemma~\ref{lem:torus}. This implies that $\pi(k)=k$, a contradiction.

The second part is immediate as either $\pi(1)=1$, or $\pi(1)>1$ and there is $1<i<\pi(1)$ such that $\pi(i)=i$.
\end{proof}

Let $i\in\cpa{1,2,\ldots,n}$ be minimal such that $\pi(i)=i$ (this $i$ exists by Lemma~\ref{lem:there_is_fixed_scc}).
Let $D\subset\Sigma$ be a disk with $\partial D=c_i$ such that: (i) $D$ contains $c_{i-1}$ in case $i>1$, (ii) $D$ does not contain $c_{i+1}$ in case $i<n$, and (iii) $D$ is either disk in $\Sigma$ bounded by $c_1$ in case $i=1$.
We claim that $D\cap g(D)=c_i$. Suppose, by way of contradiction, that $c_k\subset D\cap g(D)$.
Then, $1\leq k <i$ and $1\leq \pi(k)<i$. If $\pi(k)=k$, then $k$ contradicts minimality of $i$.
If $k<\pi(k)$, then Lemma~\ref{lem:there_is_fixed_scc} yields $k<i'<\pi(k)<i$ such that $\pi(i')=i'$, which contradicts minimality of $i$.
If $\pi(k)<k$, then let $j=\pi(k)$. So, $j=\pi(k)<k=\pi(j)$ since $\pi^2$ is the identity.
Again, Lemma~\ref{lem:there_is_fixed_scc} yields $j<i'<\pi(j)<i$ such that $\pi(i')=i'$, which contradicts minimality of $i$.
Hence, the claim $D\cap g(D)=c_i$ holds.
By Lemma~\ref{lem:fixed_scc}, either $c_i$ can be removed or $\theta$ is a nontrivial order-3 connected sum. Thus, if $\theta$ is not a nontrivial order-3 connected sum, then $\Sigma$ can be made disjoint from $g(\Sigma)$ and $\theta$ is a nontrivial order-2 connected sum by Lemma~\ref{lem:connect_sum}.
This completes the proof of the Main Theorem.

\section{Application}
\label{sec:applications}
To employ the Main Theorem, we must produce the lifted knot for a given theta-curve.
Fortunately, this is not difficult.
Given a theta-curve $\theta=\kappa\cup e$ with unknotted constituent knot $\kappa$, draw the lifted knot using the following algorithm:
\begin{enumerate}
\item Draw $\theta$ in $\R^3\cup\cpa{\infty}$ so that $\kappa$ comprises the $z$-axis and the point at infinity.
Consider the diagram given by projecting $\theta$ onto the $zy$-plane.
\item By an ambient isotopy fixing $\kappa$, arrange for all self-crossings of $e$ to have positive $y$-coordinate.
By a further isotopy, we may assume the diagram appears as shown in Figure~\ref{fig:ltc2d_1}.
\begin{figure}[htbp!]
\centering
\subfigure[\emph{Theta-curve $\theta$.}]
{
    \label{fig:ltc2d_1}
    \includegraphics[scale=1.0]{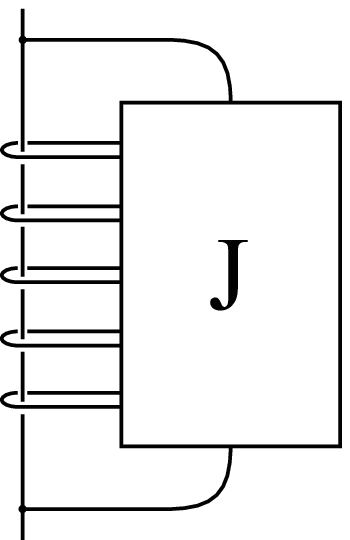}
}
\hspace{1cm}
\subfigure[\emph{Lift of $\theta$ to the double branch cover.}]
{
    \label{fig:ltc2d_2}
    \includegraphics[scale=1.0]{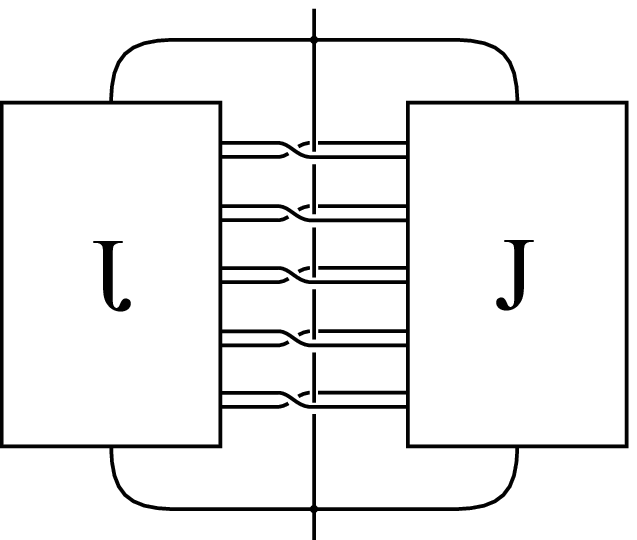}
}
\caption{Lifting a theta-curve to the double branched cover, branched over the unknotted constituent knot $\kappa$ pictured as the $z$-axis and the point at infinity.}
\label{fig:ltc2d}
\end{figure}
\item Now, lifting $e$ yields the knot $K$ given as the union of the following: (i) the diagam $J$, (ii) a copy of $J$ rotated one half of a revolution about the $z$-axis, and (iii) arcs between (i) and (ii) as shown in Figure~\ref{fig:ltc2d_2}.
\end{enumerate}

\begin{example}
Kinoshita's well-known theta-curve~\cite{Kin72} is shown in Figure~\ref{fig:kino_1}.
All three of its constituent knots are unknotted.
\begin{figure}[h!]
\centering
\subfigure[\emph{Kinoshita's theta-curve.}]
{
    \label{fig:kino_1}
    \includegraphics[scale=1.0]{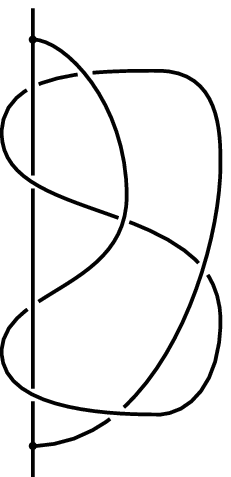}
}
\hspace{1.5cm}
\subfigure[\emph{Lift knot $K$ of Kinoshita's theta-curve.}]
{
    \label{fig:kino_2}
    \includegraphics[scale=1.0]{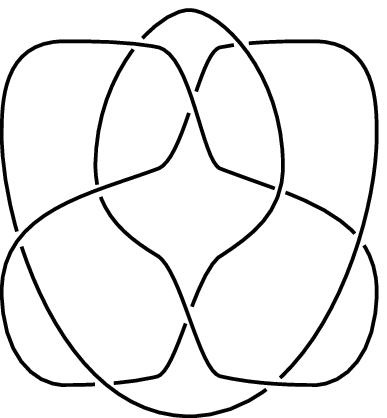}
}
%\captionsetup{labelformat=ss,textfont=it}
\caption{Lifting Kinoshita's theta-curve.}
\label{fig:kino}
\end{figure}
Applying the algorithm to Kinoshita's theta-curve, we obtain the lifted knot $K$ in Figure~\ref{fig:kino_2}.
With $K$ exhibited as a positive $3$-braid, it is a simple exercise to isotop $K$ to the standard $(3,5)$-torus knot (this fact was also observed by Wolcott~\cite{Wol87}).
As torus knots are prime~\cite[p.~95]{BZ03}, $K$ is prime and the Main Theorem implies that Kinoshita's theta-curve is prime as well.
\end{example}

\begin{remark}
Previously, the authors~\cite{CMBRS14} produced uncountably many isotopically distinct unions of three rays in $\R^3$ with the Brunnian property (namely, all three rays are knotted, but any two of them are unknotted).
To achieve this, we used sequences of three-component tangles lying in thickened spheres.
Our main tangle $A$ is shown in Figure~\ref{fig:block_A}.
\begin{figure}[htbp]
   \centering
   \includegraphics[scale=1.0]{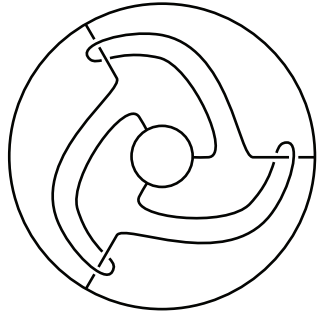}
   \caption{Tangle $A$ in a thickened sphere.}
   \label{fig:block_A}
\end{figure}
We discovered this tangle independently as described in~\cite{CMBRS14}.
A key property of the tangle $A$ proved in~\cite{CMBRS14} was that $A$ is irreducible (namely, no sphere separates $A$ into two nontrivial tangles).
By taking the thickened sphere containing $A$ and crushing each of the boundary spheres to a point, one obtains Kinoshita's theta-curve. As we have just observed, Kinoshita's theta-curve is prime. This immediately implies that the tangle $A$ is irreducible and provides an alternate proof of~\cite[Theorem 6.1]{CMBRS14}.
\end{remark}

\end{document}